\documentclass[12pt]{article}
\usepackage[utf8]{inputenc}
\usepackage{lmodern}
\usepackage{graphicx, xcolor, mathrsfs}
\usepackage{amsfonts}
\usepackage{stmaryrd}
\usepackage{blkarray, bigstrut}
\usepackage{mathtools}
\usepackage[nice]{nicefrac}
\usepackage{amsmath,amsthm,amssymb,bbm}
\usepackage[mathlines]{lineno}
\usepackage{mathabx}
\usepackage{accents}

\usepackage{enumerate}
\usepackage{enumitem}
\usepackage[colorlinks=true,citecolor=black,linkcolor=black,urlcolor=blue]{hyperref}
\usepackage[top=.9in, bottom=.9in, left=.5in , right=.5in]{geometry}
\usepackage{caption}
\usepackage{xifthen} 
\usepackage{wrapfig}
\usepackage[numbers, square]{natbib}
\usepackage{changes}
\usepackage{comment}
\usepackage{float}
\usepackage{ifthen}
\mathtoolsset{showonlyrefs}
\usepackage{tikz}
\usepackage{caption}                       
\usetikzlibrary{arrows}
\usetikzlibrary{graphs,graphs.standard}
\usetikzlibrary{positioning,arrows.meta}
\usetikzlibrary{shapes.multipart}
\usetikzlibrary{arrows}
\usetikzlibrary{automata}
\definecolor{arrowblue}{RGB}{0,0,0}  

\usepackage[splitrule]{footmisc} 
\usepackage{caption}
\newtheorem{thm}{Theorem}[section]
\newtheorem*{thm*}{Theorem}

\theoremstyle{definition}

\DeclareMathOperator{\exponentialrv}{Exp}

\newcommand{\Exp}[1]{\exponentialrv\left( #1 \right)}

\DeclareMathOperator{\degg}{deg}

\newcommand{\outdeg}[1]{\ensuremath{\degg^{+}(#1)}}

\title{A counter-example to persistence in generalised preferential attachment trees}
\author{Tejas Iyer\footnote{Weierstrass Institute for Applied Analysis and Stochastics, Anton-Wilhelm-Amo-Str. 39, 10117 Berlin, Germany. \\ E-Mail: \href{mailto:tejas.iyer@wias-berlin.de}{tejas.iyer@wias-berlin.de} }}
\date{\today}

\begin{document}

\maketitle
\abstract{Consider a generalised preferential attachment tree with attachment function $f$, that is a random tree, where at each time-step a node connects to an existing node $v$ with probability proportional to $f(\deg(v))$, where $\deg(v)$ denotes the degree of the node in the existing tree. We provide a counter-example to a conjecture of the author asserting that under the assumption $\sum_{j=1}^{\infty} \frac{1}{f(j)^2} < \infty$ there is a persistent hub in the model, that is, a single node that has the maximal degree for all but finitely many time-steps. The counter-example is a minor modification of a related counter-example due to Galganov and Ilienko. 
}
\noindent  \bigskip
\\
{\bf Keywords:} Generalised preferential attachment, birth processes, balls-in-bins processes with feedback, reinforced processes. 
\\\\
{\bf 2020 Mathematics Subject Classification:} 60J10, 60J85, 05C80. 

\section{Introduction \& main result}

Generalised preferential attachment trees are a model of randomly growing (directed) trees constructed as follows. One is given a fixed function $f\colon \mathbb{N}_0 :=\mathbb{N} \cup \{0\} \rightarrow (0, \infty)$ and starts with a tree $\mathcal{T}_0$ consisting of a single node $0$. Then, for $n \in \mathbb{N}_0$, to construct the tree $\mathcal{T}_{n+1}$ at the time-step $n+1$, a new node labelled $n+1$ is added, and connects to an existing node $j \in \mathcal{T}_n$ with probability 
\[
\frac{f(\deg^{+}(j, \mathcal{T}_{n}))}{\sum_{j=0}^{n} f(\deg^{+}(j, \mathcal{T}_{n}))},  
\]
with the edge directed outwards from $j$, i.e., $j \rightarrow n+1$. We say a node $u \in \mathbb{N}_0$ is a \emph{persistent hub} if $\outdeg{u, \mathcal{T}_{n}} = \max_{v \in \mathcal{T}_{n}} \outdeg{v, \mathcal{T}_{n}}$ for all but finitely many $n \in \mathbb{N}_0$. Based on previous evidence from works such as~\cite{dereich-morters-persistence, banerjee-bhamidi, galashin}, it is conjectured~\cite[Remark~2.11.]{iyer2024persistenthubscmjbranching} that under the assumption $\sum_{j=0}^{\infty} \frac{1}{f(j)^2}$ with probability one there is a persistent hub in the sequence of trees $(\mathcal{T}_{n})_{n \in \mathbb{N}_0}$. The main result is a counter-example to this claim:

\begin{thm} \label{thm:counter-example}
    There exists a function $f\colon \mathbb{N}_0 \rightarrow (0,\infty)$ satisfying $\sum_{j=0}^{\infty} \frac{1}{f(j)^2} < \infty$, such that, almost surely, for the associated generalised preferential attachment trees $(\mathcal{T}_{n})_{n \in \mathbb{N}_0}$ there is no persistent hub. 
\end{thm}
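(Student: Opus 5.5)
We will work in continuous time via the standard Rubin/birth-process embedding. To each node $v$ we attach an independent pure birth process $(Z_v(t))_{t\ge 0}$ on $\mathbb{N}_0$, jumping from $k$ to $k+1$ at rate $f(k)$; node $v$ is born at the time of the corresponding jump of its parent. The jump chain of the whole system is exactly $(\mathcal{T}_n)_{n\in\mathbb{N}_0}$. In this picture the holding times of $Z_v$ are independent $\Exp{f(k)}$ variables $E_{v,k}$. Node $v$ reaches degree $k$ at time $\sigma_v + \sum_{i<k} E_{v,i}$, where $\sigma_v$ is its birth time.

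Persistence of a hub is equivalent to a single node eventually always reaching each new degree level $k$ first (or tying). We therefore study the ``race'' between nodes in reaching level $k$. Since $\sum 1/f(j)^2<\infty$, the centred sums $\sum_{i<k}(E_{v,i}-1/f(i))$ converge almost surely. If also $\sum 1/f(j)<\infty$, each process explodes in finite time.

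Following Galganov and Ilienko, we build $f$ so that $\sum 1/f(j)^2<\infty$ but $\sum 1/f(j)=\infty$, and so that $f$ is \emph{irregular along a sparse sequence of levels}. Concretely, $f$ is mostly large, say $f(j)=j$ (so the sum of $1/f^2$ is finite), except on a rapidly growing sequence of blocks $[k_m, k_m+\ell_m)$ where $f$ is constant and much smaller. The contribution of the blocks to $\sum 1/f^2$ must still be summable, e.g. $\ell_m/c_m^2 \le 2^{-m}$ for block value $c_m$. Each block is arranged so that the variance of the time spent in it, $\ell_m/c_m^2$, is small, while its mean $\ell_m/c_m$ is huge.

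The key feature comes from a different choice: single levels $k_m$ with very small rate $f(k_m)=\varepsilon_m$. The waiting time there is a single $\Exp{\varepsilon_m}$ variable, whose fluctuations are of the same order as its mean. This conflicts with summability of $1/f^2$ unless $\varepsilon_m$ is large, so the tension must be resolved by comparing against the growing scale of accumulated time. The plan is to choose $f(k_m)$ so that $1/f(k_m)^2$ is summable, yet at level $k_m$ the top two or three nodes have arrival-time gaps (determined by earlier, large-rate levels) that are much smaller than $1/f(k_m)$. The fresh independent $\Exp{f(k_m)}$ variables then reshuffle the leader with probability bounded below. Choosing $f$ very large between special levels keeps those gaps tiny: the gaps are governed by the tail variance $\sum_{i\ge k_{m-1}} 1/f(i)^2$ restricted to non-special levels, which we make super-summably small relative to $1/f(k_m)^2$.

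The steps are as follows:
\begin{enumerate}
\item Show that the maximal degree at large times is attained among a bounded set of early nodes. Using the embedding, a node born later needs far longer to traverse the slow levels, so with high probability only nodes $0$ and $1$ contend. This reduces the problem to a race between two independent birth processes started at times $0$ and $\sigma_1$.
\item For the two-node race, show that at each special level $k_m$ the conditional probability, given the past, that the leadership (in order of arrival at level $k_m+1$) differs from that at level $k_m$ is at least some $c>0$. The reason is that the gap between the two arrival times at level $k_m$ is $o(1/f(k_m))$, while the fresh increments are independent exponentials.
\item Apply the conditional Borel--Cantelli lemma to conclude that the leader changes infinitely often, so neither node is persistent.
\end{enumerate}

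Step 1 must also handle the initial offset $\sigma_1$. This is absorbed because $\sum 1/f=\infty$ makes the common drift irrelevant while the offset stays fixed, and the accumulated random offset converges. We therefore need the special levels to have $1/f(k_m)$ dominating this fixed limiting difference, which forces $1/f(k_m)\to$ something non-vanishing. This is compatible with $\sum 1/f(k_m)^2<\infty$ only if we instead let gaps vanish by design.

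The main obstacle is exactly this tension. The limiting difference of arrival times between two nodes converges almost surely to a nondegenerate random variable $D$, and summability of $1/f^2$ forces $1/f(k_m)\to 0$, so a single exponential cannot overturn a fixed gap. The resolution we will try first, following Galganov--Ilienko, is to make $f$ \emph{not} increasing, with infinitely many levels of small rate. For nodes far apart in degree this matters little. But persistence concerns the degree \emph{at a given time} rather than the arrival time at a given level. So we instead exhibit times at which a lagging node sits just past a fast stretch while the leader is stuck at a slow level; these interleavings recur infinitely often by a 0-1 argument. Making this precise, i.e. showing that the maximal degree process changes hands infinitely often even though arrival-time differences converge, is the delicate step, and the block design of $f$ has to be tuned for it.
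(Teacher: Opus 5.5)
\textbf{There is a genuine gap.} It is the obstruction you hit in your last paragraph: under $\sum_j 1/f(j)^2<\infty$, no strategy that confines the race to finitely many nodes and then reshuffles them can work.

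For two nodes $u,v$, let $T_u(k)$ be the time $u$ reaches degree $k$. Then $T_v(k)-T_u(k)=\sigma_v-\sigma_u+\sum_{i<k}(E_{v,i}-E_{u,i})$. This converges almost surely, since the centred sum is an $L^2$-bounded martingale. Its limit $D$ has a continuous law, because it contains an exponential summand independent of all the others: the first holding time of a node among $u,v$ that is not an ancestor of the other. So $D\neq 0$ almost surely.

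Consequently the gap at a special level $k_m$ tends to $|D|>0$, while summability forces $1/f(k_m)\to 0$. Your Step~2 claim is therefore false: the gap is not $o(1/f(k_m))$, and fresh exponentials cannot reshuffle the leader with probability at least $c$ infinitely often. The loophole you propose (degree at a fixed time versus arrival at a fixed level) does not exist either. Suppose $T_u(k)<T_v(k)$ for all $k\ge K$, and $v$ has degree $k\ge K$ at time $t$. Then $T_u(k)<T_v(k)\le t$, so $u$ has degree at least $k$ at time $t$.

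Hence, if the continuous-time process does not explode, your Step~1 (the maximum is eventually carried by a bounded set of nodes) would itself produce a persistent hub: apply the above to each of the finitely many pairs. A counter-example needs infinitely many distinct nodes to take the lead. That is the opposite of your heuristic that later nodes are slowed down.

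\textbf{The missing idea, used in the paper, is to make the \emph{population} explode while no individual does.} Take $f(0),f(1),\ldots$ to be $1$, then $\alpha_1$ repeated $d_1$ times, then $2$, then $\alpha_2$ repeated $d_2$ times, then $3$, and so on, with $d_k=4^{k^2}$ and $\alpha_k=2^{2^{k^3}}$.
\begin{itemize}
\item $\sum_j 1/f(j)\ge\sum_k 1/k=\infty$, so each individual's birth process is non-explosive and every node has finite degree at every finite time.
\item $\sum_j 1/f(j)^2=\sum_k k^{-2}+\sum_k d_k\alpha_k^{-2}<\infty$.
\item Galganov and Ilienko showed that the CMJ process with the smaller rates (value $1$ in place of $k$ between the blocks) explodes. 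A monotone coupling of the individual birth processes then gives $\tau_\infty=\lim_n\tau_n<\infty$ almost surely for $f$.
\end{itemize}
Since $(\mathcal{T}_n)$ only sees the process before $\tau_\infty$, every node has finite degree in $\mathcal{T}_\infty$. On the other hand, the maximal degree is unbounded: a bound $L$ would produce infinitely many $L$-moderate individuals, contradicting \cite[Proposition~4.4]{inhom-sup-pref-attach}. A persistent hub would need unbounded degree, so none exists.

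This is consistent with the pairwise obstruction above: each node reaches only finitely many levels before $\tau_\infty$, and the lead passes through infinitely many distinct nodes.
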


Theorem~\ref{thm:counter-example} is somewhat surprising, in part because the condition $\sum_{j=0}^{\infty} \frac{1}{f(j)^2} < \infty$ is known to be necessary and sufficient for the similar notion of leadership in non-linear P\'olya urn schemes~\cite{oliveira-brownian-motion, leadership}. 
We emphasise that this counter-example is a modification of the similar counter-example due to Galganov and Ilienko~\cite{galganov-ilienko}. This construction also provides a counter-example to the more general conjecture of the author concerning persistent hubs in CMJ branching processes with independent increments~\cite[Remark~2.6]{iyer2024persistenthubscmjbranching}. 

\section{Proof}
In order to prove the result, we use the standard embedding of the process as a Crump-Mode-Jagers (CMJ) process $(\mathscr{T}_{t})_{t \geq 0}$. \emph{Individuals} in the process are labelled by elements of the infinite \emph{Ulam-Harris} tree $\mathcal{U} : = \bigcup_{n \geq 0} \mathbb{N}^{n}$. The set $\mathbb{N}^{0} := \{\varnothing\}$ represents the ancestral \emph{root} individual $\varnothing$. We denote other elements $u \in \mathcal{U}$ as a tuple $u_1 \cdots u_m$, which may be interpreted as the $u_m$th child of $u_1 \cdots u_{m-1}$. We label elements of $\mathcal{U}$ with values in $[0, \infty]$, representing \emph{birth-times}. Associated with each $u \in \mathcal{U}$ is a collection of exponentially distributed random variables $(X(uj))_{j \in \mathbb{N}} \in [0,\infty)^\mathbb{N}$, where each $X(uj) \sim \Exp{f(j-1)}$. We think of $X(uj)$ as the \emph{displacement} or \emph{waiting time} between the $(j-1)$th and $j$th child of $u$. We then define 
the random function $\mathcal{B}\colon \mathcal{U} \rightarrow [0, \infty]$ recursively as follows:  
\[\mathcal{B}(\varnothing) : = 0 \quad \text{and for $u \in \mathcal{U}, i \in \mathbb{N}$,} \quad \mathcal{B}(ui) := \mathcal{B}(u) + \sum_{j=1}^{i} X(uj).\]
For each $u \in \mathcal{U}$ we think of the value $\mathcal{B}(u)$ as its `birth time'. For each $t \in [0, \infty]$, we set $\mathscr{T}_{t} = \{x \in \mathcal{U} \colon \mathcal{B}(x) \leq t\}$ and note that $\mathscr{T}_{t}$ may be interpreted as a directed tree in the natural way - connecting elements $u \in \mathcal{U}$ with an edge directed outwards to its children $(u \ell)_{\ell \in \mathbb{N}}$ in $\mathcal{U}$. In relation to the process $(\mathscr{T}_{t})_{t \geq 0}$, we define the stopping times $(\tau_{k})_{k \in \mathbb{N}_{0}}$ such that 
 \begin{equation} \label{eq:tau-k-def} 
 \tau_{k} := \inf\{t \geq 0\colon |\mathscr{T}_{t}| \geq k\}.
 \end{equation}
Note that the values $(\tau_{k})_{k \in \mathbb{N}_0}$ describe the times in which changes, or `jumps' occur in the process $(\mathscr{T}_{t})_{t \geq 0}$. It is well-known, by applying properties of exponential random variables, that $(\mathscr{T}_{\tau_{n}})_{n \in \mathbb{N}_{0}}$ and the process of generalised preferential attachment trees $(\mathcal{T}_{n})_{n \in \mathbb{N}_0}$ are equal in distribution (up to, say, re-labelling of vertices). 

For $t \in [0, \infty]$, denote by $\xi([0,t])$ the random variable indicating the number of offspring of the root generated in the process $(\mathscr{T}_{t})_{t \geq 0}$ up to time $t$. 

\begin{proof}[Proof of Theorem~\ref{thm:counter-example}]
    Note that if $(f(j))_{j \in \mathbb{N}_0}$ is a sequence such that $\sum_{j=0}^{\infty} \frac{1}{f(j)} = \infty$, and yet \[\lim_{n \to \infty} \tau_{n} =: \tau_{\infty} < \infty\] almost surely, then it is impossible for $(\mathcal{T}_{n})_{n \in \mathbb{N}_0}$ to contain a persistent hub. Indeed, $\sum_{j=0}^{\infty} \frac{1}{f(j)} = \infty$ ensures, by standard criteria for pure-birth processes, that $\xi([0,t]) < \infty$ for all $t \in [0, \infty)$ which means, by a union bound over the countable set $\mathcal{U}$, that the degree of every node in $\mathcal{T}_{\infty}:= \bigcup_{n \in \mathbb{N}_0} \mathcal{T}_n$ is finite almost surely. If the maximal degree in $\mathcal{T}_{\infty}$ was bounded from above, by $L$ say, this would imply the existence of infinitely many individuals whose ancestors all have at most $L$ children, which contradicts~\cite[Proposition~4.4]{inhom-sup-pref-attach} (such individuals are termed $L$-moderate there). Therefore, the maximal degree of $\mathcal{T}_{n}$, and hence the degree of any persistent hub must diverge to infinity.  
    
    We now make a minor modification to the construction of Galganov and Ilienko, to construct a function $f$ such that
    $\sum_{j=0}^{\infty} \frac{1}{f(j)} = \infty$, $\sum_{j=0}^{\infty} \frac{1}{f(j)^2} < \infty$ and $\tau_{\infty} < \infty$ almost surely. To do so, we define the values $f(0), f(1), \ldots$, by 
    \[
    1, \underbrace{\alpha_{1}, \cdots, \alpha_1}_{d_1 \text{ times}}, 2, \underbrace{\alpha_{2}, \cdots, \alpha_2}_{d_2 \text{ times}}, 3, \underbrace{\alpha_{2}, \cdots, \alpha_3, }_{d_3 \text{ times}} \cdots, 
    \]
    where $d_{k} := 4^{k^2}$ and $\alpha_{k} := 2^{2^{k^3}}$. Note that $\sum_{j=0}^{\infty} \frac{1}{f(j)} > \sum_{j=1}^{\infty} \frac{1}{j} = \infty$, whilst $\sum_{j=0}^{\infty} \frac{1}{f(j)^{2}} = \sum_{k=1}^{\infty} \frac{1}{k^2} + \sum_{k=1}^{\infty} \frac{4^{k^2}}{2^{2^{k^3}}} < \infty$. 
    It remains to show that $\tau_{\infty} < \infty$ almost surely. For this, note that Galganov and Ilienko have already shown that this occurs for the process with smaller rates 
    \[
    1, \underbrace{\alpha_{1}, \cdots, \alpha_1}_{d_1 \text{ times}}, 1, \underbrace{\alpha_{2}, \cdots, \alpha_2}_{d_2 \text{ times}}, 1, \underbrace{\alpha_{3}, \cdots, \alpha_3, }_{d_3 \text{ times}} \cdots, 
    \]
    whence, by a standard coupling argument (coupling the pure-birth processes associated with individuals in the processes), we also get $\tau_{\infty} < \infty$ in this process. 
\end{proof}

\section*{Acknowledgements}
TI is funded by Deutsche Forschungsgemeinschaft (DFG) through DFG Project no. $443759178$.

\bibliographystyle{abbrv}
\bibliography{refs}

\end{document}